\newtheorem{thm}{Theorem}[section]
 \newtheorem{cor}[thm]{Corollary}
 \newtheorem{lem}[thm]{Lemma}
 \newtheorem{prop}[thm]{Proposition}
 \theoremstyle{definition}
 \theoremstyle{remark}
 \newtheorem*{ex}{Example}
 \numberwithin{equation}{section}
\begin{document}

\title[The Right Key of a Tableau]
 {A Direct Way to Find the Right Key of a Semistandard Young Tableau$^1$\footnote{$^1$To be contained in the author's doctoral thesis written under the supervision of Robert A. Proctor.}}

\author[M. J. Willis]{Matthew J. Willis}

\address{Department of Mathematics, University of North Carolina, Chapel Hill, NC 27599, USA}

\email{mjwillis@email.unc.edu}

\begin{abstract}
The right key of a semistandard Young tableau is a tool used to find Demazure characters for $sl_n(\mathbb{C})$.  This paper gives methods to obtain the right and left keys by inspection of the semistandard Young tableau.
\end{abstract}

\subjclass{05E10, 17B10}

\keywords{right key, Demazure character, key polynomial, jeu de taquin}

\maketitle

October 27, 2011

\section{Introduction}

A ``key'' is a semistandard tableau such that the entries in each column also appear in each column to the left.  If a semistandard tableau $T$ is not a key itself, then the ``right (left) key'' of $T$ is a slightly greater (lesser) tableau (via entrywise comparison) that is associated to $T$.

Perhaps the foremost result that uses the notion of the right key of a tableau is a summation formula [6, Thm. 1] for Demazure characters due to Lascoux and Sch{\"u}tzenberger [3].  Recall a well known description of Schur functions:  Given a partition $\lambda$, the Schur function $s_\lambda$ is equal to the sum of the weights of all semistandard tableaux of shape $\lambda$.  More generally, given a highest weight $\mu$ and a permutation $w$, the Demazure character (or ``key polynomial'') determined by $\mu$ and $w$ of a ``Borel subalgebra'' of $sl_n(\mathbb{C})$ is equal to the sum of the weights of all semistandard tableaux of shape $\mu$ whose right keys are less than or equal to the key corresponding to $w$.

We now describe a method to quickly write down the right key of a given semistandard tableau $T$.  To do so we need only one special definition beyond the usual terminology.  Given a sequence $(x_1,x_2,x_3,...)$, its \textit{earliest weakly increasing subsequence} (EWIS) is $(x_{i_1},x_{i_2},x_{i_3},...)$, where $i_1 = 1$ and for $j>1$ the index $i_j$ is the smallest index such that $x_{i_j} \geq x_{i_{j-1}}$.

Here is an English description of our method that a non-mathematician could apply:  Draw an empty Young diagram that has the same shape as $T$; this diagram will receive the entries of the right key of $T$.  Viewing the bottom entries of the columns of $T$ from left to right as forming a sequence, use two index fingers to find the EWIS of this sequence.  Whenever an entry is added to the EWIS, put a dot above it.  When this EWIS ends, write its last member in the diagram for the right key of $T$ in the lowest available box in the leftmost available column.  Next, repeat the process as if the boxes with dots are no longer part of $T$.  Once every box in $T$ has a dot, the leftmost column of the right key has been formed.  To find the entries of the next column in the right key: ignore the leftmost column of $T$, erase the dots in the remaining boxes, and repeat the above process.  Continue in this manner until the Young diagram has been filled with the entries of the right key of $T$.

It can be seen that if there is more than one column of a given length, upon reaching this length one may ignore all but the rightmost of these columns.  Also, erasing the dots is unnecessary if there is one color of ink available for each distinct column length.

\begin{ex}
Let $T$ be the semistandard tableau shown in Figure 1.  First we find the EWISs that begin in the first column of $T$.  To display all six EWISs with just one figure, we use the over-decorations $\dot{\textcolor[rgb]{1.00,1.00,1.00}{1}}$, $\bar{\textcolor[rgb]{1.00,1.00,1.00}{1}}$, $\tilde{\textcolor[rgb]{1.00,1.00,1.00}{1}}$, $\acute{\textcolor[rgb]{1.00,1.00,1.00}{1}}$, $\hat{\textcolor[rgb]{1.00,1.00,1.00}{1}}$ and $\check{\textcolor[rgb]{1.00,1.00,1.00}{1}}$ to indicate their successive creations and removals.
\end{ex}

\begin{figure}[h]
\caption{}

\ytableausetup{centertableaux, boxsize = 2em}
\begin{ytableau}
\check{1} & \check{1} & \hat{3} & \hat{4} & \acute{6} \\
\hat{2} & \hat{3} & \acute{5} & \tilde{7} & \dot{9} \\
\acute{4} & \tilde{5} & \tilde{6} & \bar{8} \\
\tilde{5} & \bar{7} & \dot{9} \\
\bar{7} \\
\dot{8}
\end{ytableau}
\hspace{25mm}
\begin{ytableau}
1 & 6 & 6 & 6 & 6 \\
4 & 7 & 7 & 7 & 9 \\
6 & 8 & 8 & 9 \\
7 & 9 & 9 \\
8 \\
9
\end{ytableau}

\end{figure}

So the six EWISs are (8,9,9), (7,7,8), (5,5,6,7), (4,5,6), (2,3,3,4), and (1,1).  Hence the entries of the 1st column of the right key of $T$ are 9, 8, 7, 6, 4, and 1.  To find the EWISs for the later columns, we repeat the process (after erasing the over-decorations) as if the first column is no longer part of $T$.  The four EWISs for the rightmost length 4 column are (9,9), (6,8), (5,7), and (3,4,6).  Continuing the procedure for the length 3 column, the three EWISs are (8,9), (7), and (4,6).  For the length 2 column (and in fact the rightmost column of any SSYT), the EWISs are just the entries in the column.  Writing in the last members from each of these EWISs, we obtain the right key of T, which is also displayed in Figure 1.

This paper's main result, Theorem 4.5, states that the output of this method is in fact the right key of $T$.  The proof uses the jeu de taquin description of the right key presented in [2].

\section{Scanning Tableau}

Now we describe the above procedure with mathematical notation.  A \textit{shape} $\zeta$ is a sequence $\zeta = (\zeta_1 , \zeta_2, ... , \zeta_k)$ of positive integers such that $\zeta_1 \geq \zeta_2 \geq ... \geq \zeta_k$ for some $k \geq 0$.  Each member of a shape is a \textit{(column) length}. The \textit{Young diagram of $\zeta$} is the diagram with $\zeta_i$ top justified boxes in column $i$ for $1 \leq i \leq k$.  A \textit{semistandard tableau of shape $\zeta$} is a filling of the Young diagram of $\zeta$ with positive integers such that the entries are strictly increasing down the columns and weakly increasing across the rows.  Henceforth we more simply write \textit{tableau (of shape $\zeta$}).  (For Lie theoretical purposes, one would bound both the lengths of a shape and the tableau entries by some fixed $n \geq 1$.)

Fix a tableau $T$ of shape $\zeta$ with $k$ columns.  Let $T_i$ denote the $i$th column of $T$ for $1 \leq i \leq k$.  The tableau $T$ is a \textit{key} if the entries in $T_i$ also appear in $T_{i-1}$ for $1 < i \leq k$.  Given a column $C$, let $l(C)$ denote its length.  So $l(T_i) = \zeta_i$.  Define $b_i$ to be the bottom entry of $T_i$ for $1 \leq i \leq k$.  Let $e = (e_1,...,e_m)$ for some $m \leq k$ denote the EWIS obtained from $b = (b_1,...,b_k)$.  Let $d = \{ d_1, ... , d_m \}$ denote the collection of boxes in the Young diagram of $T$ that respectively contain the entries of $e$.  Define $\setminus$ to be the operator that removes a specified set of boxes (and their entries) from a diagram or tableau.  It is easy to see that $T \setminus d$ is a tableau.

Write $x \bigodot C$ to notate the result of attaching a single entry $x$ to the bottom of a column $C$, and $C \bigoplus T$ to notate the result of prepending a column $C$ to the left side of $T$.  The following doubly recursive definition specifies a tableau $S(T)$ that has the same shape as $T$;  here $S_1(T)$ denotes the first column of $S(T)$ and $S_{1B}(T)$ denotes the bottom entry of $S_1(T)$.  Set $S_{1B}(T):= e_m$ (if $T$ is nonempty; otherwise, for terminating purposes set $S_{1B}(T):= \emptyset$).  Set $S_1(T):= S_{1B}(T) \bigodot S_1(T \setminus d)$.  Define $S(T):= S_1(T) \bigoplus S(T \setminus T_1)$.  We call $S(T)$ the \textit{scanning tableau} of $T$.  The fact that $S(T)$ is a key will follow from the main result of this paper.

\section{The Right Key}

In this section we present a common definition of the right key of a tableau, as can be found in Appendix 5 of [2].  We also present a specific method for calculating the right key that uses this definition.  Given a shape $\zeta$ with $k$ lengths and a shape $\eta$ with $l \leq k$ lengths such that $\eta_i \leq \zeta_i$ for $1 \leq i \leq l$, the \textit{skew diagram of shape $\zeta \setminus \eta$} is obtained by removing the diagram of $\eta$ from the diagram of $\zeta$.  The sequence of column lengths of $\zeta \setminus \eta$ is $(\zeta_1 - \eta_1, ... , \zeta_l - \eta_l, \zeta_{l+1}, ... , \zeta_k)$.  Some of the lengths in a skew diagram may be zero.  A \textit{skew tableau} is a filling of a skew diagram with positive integers such that the entries are strictly increasing down the columns and weakly increasing along the rows.

Now we assume familiarity with the jeu de taquin (JDT) process,  as in (for example) Chapter 1 of [2].  The \textit{rectification} of a skew tableau [2, p. 15] is the tableau that results from successively applying ``JDT slides'' to the ``inside corners'' of the skew tableau (in any order) until there are no inside corners left.  It is known that the rectification of a skew tableau is unaffected by any JDT slide or ``reverse JDT (RJDT) slide''.

\begin{lem}
 Let $U$ be the skew tableau of some shape $\zeta \setminus \eta$ that is formed by arranging the $k$ columns $U_1, ... , U_k$ according to $\eta$.  Let $1 \leq l \leq k$ and $d > 0$.  Using RJDT slides, one may shift all of the entries of each of $U_1 , ... , U_l$ down $d$ rows without otherwise modifying $U$.
\end{lem}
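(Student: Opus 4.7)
The plan is to exhibit an explicit sequence of RJDT slides realizing the shift and then verify that each slide behaves as claimed. I organize the slides into $d$ successive passes. Within pass $p$, for $p = 1, 2, \ldots, d$, I sweep left-to-right through the column indices $j = 1, 2, \ldots, l$ and perform exactly one RJDT slide for each $j$; the slide in iteration $(p, j)$ is initiated at the cell $(\zeta_j + p, j)$, which sits immediately below the current bottom of $U_j$.

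The argument proceeds by maintaining the following invariant, verified step by step: just before iteration $(p, j)$, columns $U_1, \ldots, U_{j-1}$ have been shifted down $p$ rows, columns $U_j, \ldots, U_l$ have been shifted down $p - 1$ rows, and columns $U_{l+1}, \ldots, U_k$ occupy their original positions. Each iteration's slide should shift $U_j$ down one additional row while leaving every other column fixed, thereby restoring the invariant for the next iteration.

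Three items must be checked. First, that $(\zeta_j + p, j)$ is a legitimate outer corner of the current outer shape; this is a short calculation using $\zeta_{j+1} \le \zeta_j \le \zeta_{j-1}$ together with the pattern of shifts performed so far. Second, and this is the core of the argument, that the slide travels straight up column $j$. At an intermediate hole position $(r, j)$, the entry directly above is an entry of the still-to-be-moved portion of the shifted $U_j$, while the entry directly to the left (when present) is an entry of the already-shifted $U_{j-1}$; both can be identified as entries of the \emph{original} tableau $U$ sitting in the common row $r - p$. The weak row-increase property of $U$ therefore gives (left entry) $\le$ (above entry), and the standard RJDT tie-breaking convention (take the above entry in case of equality, which is the only choice compatible with strict column increase) sends the hole upward. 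Third, the slide terminates at $(\eta_j + p, j)$: the cell above sits in the updated inner shape, and the cell to the left sits in the updated inner shape as well because $\eta_j \le \eta_{j-1}$, so both slots are empty.

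The step that will require the most care is the inequality in the second item. One has to keep clear bookkeeping of which portions of $U_j$ and $U_{j-1}$ have been displaced by the moment the hole reaches a given row $r$, so that the two candidate entries the slide is comparing can be matched with a specific pair of entries in a common row of the original $U$; once that matching is in place, the semistandardness of $U$ supplies the inequality directly. After the $d$ passes terminate, each of $U_1, \ldots, U_l$ has been shifted down $d$ rows via RJDT slides while $U_{l+1}, \ldots, U_k$ were never touched, which is the conclusion.
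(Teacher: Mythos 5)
Your proposal is correct and follows essentially the same route as the paper: both exhibit an explicit sequence of RJDT slides initiated directly below the columns to be shifted and check that each slide travels straight up its column. The only differences are cosmetic — the paper shifts each column down all $d$ rows before moving to the next, whereas you interleave the slides into $d$ left-to-right passes (which makes the key comparison cleaner, since the left and above neighbors of the hole then come from a common row of the original $U$), and you supply the verification that the paper dismisses with ``it can be seen.''
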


\begin{proof}
As $j$ runs from 1 to $l$, successively perform $d$ RJDT slides beginning directly under $U_j$.  It can be seen that for $2 \leq j \leq l$, each of these RJDT slides will pull each entry of $U_j$ down one row (without affecting the columns that have already been pulled down).
\end{proof}

A skew tableau is \textit{frank} if its lengths are a rearrangement of the lengths of the shape of its rectification.

\begin{prop}[]
\textbf{[2, p. 208]}  Given a tableau $T$ and a skew diagram $\zeta \setminus \eta$ whose lengths are a rearrangement of the lengths of $T$, there is a unique skew tableau on $\zeta \setminus \eta$ that rectifies to $T$.
\end{prop}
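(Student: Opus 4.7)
The plan is to prove uniqueness and existence separately. For uniqueness, suppose $U, U'$ are two skew tableaux on $\zeta \setminus \eta$ both rectifying to $T$. I would induct on $|\eta|$; the base case $|\eta| = 0$ forces $U = T = U'$ directly. For the inductive step, I would pick an inside corner $c$ of $\zeta \setminus \eta$ and apply the JDT slide at $c$ to both $U$ and $U'$, yielding skew tableaux $V, V'$ with common rectification $T$. I would then need to argue that $V, V'$ have the same skew shape, so that the inductive hypothesis applies; once $V = V'$, reversing the slide via a single RJDT slide recovers $U = U'$. The required matching of shapes should follow from a Knuth-equivalence style argument: since the reading words of $V$ and $V'$ are both Knuth equivalent to the reading word of $T$, and their common starting shape is fixed, one should deduce that the outside corner where each slide terminates is the same.

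For existence, my plan is to construct $U$ from $T$ via a sequence of RJDT slides, inducting on the number of ``inversions'' in the column length sequence $(\zeta_1 - \eta_1, \ldots, \zeta_k - \eta_k)$ of $\zeta \setminus \eta$, namely pairs $i < j$ with $\zeta_i - \eta_i < \zeta_j - \eta_j$. The base case, zero inversions, means this sequence is weakly decreasing and hence equals (as a sequence) the column length sequence of $T$, since both are weakly decreasing rearrangements of the same multiset. In this case I would iterate Lemma~3.1: for $l = 1, 2, \ldots, k$ shift the initial $l$ columns of the current skew tableau down by $\eta_l - \eta_{l+1}$ rows, setting $\eta_{k+1} := 0$. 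By telescoping, column $j$ ends up shifted down by exactly $\eta_j$, producing a skew tableau on $\zeta \setminus \eta$ that is RJDT-equivalent to $T$ and hence rectifies to $T$.

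For the inductive step of existence, I would locate an adjacent inversion $\zeta_i - \eta_i < \zeta_{i+1} - \eta_{i+1}$, modify $\zeta$ and $\eta$ to a new frank skew shape in which these two column lengths are swapped (reducing the number of inversions by one), apply the inductive hypothesis to obtain a skew tableau on this modified shape rectifying to $T$, and then perform a controlled RJDT slide to transform it into a skew tableau on $\zeta \setminus \eta$. The hard part will be this last step: one must verify that valid modified partitions $\zeta', \eta'$ exist (which may require simultaneously adjusting several columns beyond just $i$ and $i+1$, since naively swapping may violate the partition condition), and that the RJDT slide's dynamics produce precisely the target shape without disturbing the remaining columns. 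A case analysis depending on whether $\zeta_i = \zeta_{i+1}$ or $\eta_i = \eta_{i+1}$ is likely needed to handle all configurations.
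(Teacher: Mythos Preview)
The paper does not supply its own proof of this proposition: it is quoted with the citation \textbf{[2, p.~208]} and used as a black box. So there is nothing in the paper to compare your argument against, and any correct self-contained proof would already go beyond what the paper does here.

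That said, your uniqueness argument has a real gap. After sliding $U$ and $U'$ into the same inside corner $c$, you need $V$ and $V'$ to land on the \emph{same} skew shape in order to invoke the inductive hypothesis. You propose to deduce this from Knuth equivalence of reading words, but Knuth equivalence only pins down the rectification (the $P$-tableau); it does not by itself determine which outside corner a single jeu de taquin slide vacates. The fact that two jeu de taquin--equivalent skew tableaux of the same shape slide to the same shape is exactly the content of Haiman's \emph{dual equivalence}, or equivalently of the growth-diagram/Fomin description of JDT, and it is essentially equipotent with the proposition you are trying to prove. Without importing that machinery (or Fulton's word-based characterisation of frank tableaux), the induction does not close.

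Your existence argument is in better shape. The base case via Lemma~3.1 is fine. For the inductive step, the delicate point you already flag is real: an ``adjacent length swap'' by RJDT slides produces \emph{some} frank skew tableau whose $i$th and $(i{+}1)$st column lengths are exchanged, but not automatically one sitting on the prescribed diagram $\zeta\setminus\eta$. You will need a further application of Lemma~3.1--type vertical shifts to realign the columns to the target $\eta$, and you must check that the required $\zeta',\eta'$ are genuine partitions at every stage. This can be made to work, but it is more bookkeeping than your sketch suggests.
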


\begin{cor}
The rightmost column of a frank skew tableau that rectifies to $T$ depends only upon the length of that column.
\end{cor}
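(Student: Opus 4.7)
The plan is to reduce the corollary to the uniqueness statement of Proposition 3.2 by transforming an arbitrary frank skew tableau into a canonical representative via slides that leave its rightmost column pointwise fixed. Concretely, let $U$ and $V$ be two frank skew tableaux rectifying to $T$, both with rightmost column of length $d$; I aim to show they share a common rightmost column.

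First I would fix, for each choice of $T$ and $d$, a canonical frank skew shape $\zeta^* \setminus \eta^*$ whose column lengths are a rearrangement of those of $T$ and whose rightmost length is $d$. A natural choice is to place the non-rightmost columns in weakly decreasing order of length with the minimal vertical offsets consistent with a valid skew shape, and the single length-$d$ column at the rightmost position. This canonical shape depends only on $T$ and $d$, and by Proposition 3.2 there is a unique skew tableau on $\zeta^* \setminus \eta^*$ that rectifies to $T$; its rightmost column depends only on $T$ and $d$, and is the candidate for the common value of $U_k$ and $V_k$.

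The key step is to transform $U$ into this canonical skew tableau while preserving the rightmost column. I would apply Lemma 3.1 (and its JDT counterpart for upward shifts) iteratively to prefixes $U_1, \dots, U_l$ with $l < k$. Each such prefix shift leaves the entries of $U_k$ pointwise fixed, and since all JDT and RJDT slides preserve rectification, at every stage the result is still a frank skew tableau rectifying to $T$. Once the shape of $U$ has been brought to $\zeta^* \setminus \eta^*$, Proposition 3.2 forces the resulting tableau to equal the canonical one, so the (unchanged) rightmost column of $U$ coincides with the canonical rightmost column. The same argument applied to $V$ gives the desired equality $U_k = V_k$.

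The main obstacle is verifying that an arbitrary frank skew shape with rightmost column length $d$ can actually be brought to $\zeta^* \setminus \eta^*$ using only operations confined to the first $k - 1$ columns. Since prefix shifts alone cannot reorder column lengths, one must supplement them with more general JDT and RJDT slides whose inside or outside corners lie strictly to the left of column $k$; any such slide leaves $U_k$ untouched. I would establish reachability by induction on $k$: after using Lemma 3.1 to open a sufficient vertical gap between $U_{k-1}$ and $U_k$, the problem reduces to rearranging a frank skew tableau with fewer columns, to which the inductive hypothesis applies, and afterward a final sequence of Lemma 3.1 shifts realigns the leftward block against $U_k$ to produce the canonical shape.
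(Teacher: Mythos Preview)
The paper supplies no proof for this corollary; it is recorded immediately after Proposition~3.2 and left to the reader (implicitly relying on the development in Fulton's Appendix~A.5). So there is essentially nothing in the paper to compare against beyond the bare citation, and your write-up is already far more explicit than the paper's treatment.

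That said, your outline has a real gap. You assert that JDT and RJDT slides ``whose inside or outside corners lie strictly to the left of column $k$'' leave $U_k$ untouched. For RJDT slides this is correct: an outside corner in column $j<k$ initiates a slide whose empty box migrates only up and to the left, so column $k$ is never reached. For JDT slides it is \emph{false} in general: a slide starting at an inside corner in column $j<k$ can propagate rightward column by column and eventually pull an entry out of $U_k$. Since prefix shifts alone cannot permute the lengths of $U_1,\dots,U_{k-1}$, you genuinely need some such slides, and as stated they are not safe.

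The remedy you hint at---using Lemma~3.1 to push $U_1,\dots,U_{k-1}$ into rows strictly below every row of $U_k$---does work, but it must be invoked \emph{before} any JDT moves among the first $k-1$ columns, not merely as part of the inductive wrap-up. Once the left block and $U_k$ occupy disjoint sets of rows, every slide confined to the left block stays there, and you may permute its column lengths freely (for instance via length swaps and their JDT inverses). The final realignment of the left block against $U_k$ also needs a word: those upward shifts are JDT slides, and you must check they cannot reach column $k$; performing them one column at a time in the appropriate order, mirroring the proof of Lemma~3.1, handles this. With these points made explicit your argument goes through, and it is essentially the standard way this fact is established.
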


Fix a shape $\zeta$ with $k$ lengths.  Let $T$ be a tableau of shape $\zeta$.  For $1 \leq i \leq k$, define $R_i(T)$ to be the rightmost column of any frank skew tableau whose rightmost column has length $\zeta_i$ and that rectifies to $T$.  The \textit{right key of} $T$ is $R(T) := \bigoplus_{i=1}^k R_i(T)$.  Note that $R(T)$ has shape $\zeta$.  Also, it is known [2, App. A.5 Cor. 2] that $R(T)$ is a key.  Before specifying a method to obtain the individual columns of $R(T)$, we define an operation that will be iterated to produce each of them.

Let $U$ be a skew tableau of some shape $\zeta \setminus \eta$ formed from the $k$ columns $U_1 , ... , U_k$, where $\eta$ has $l \leq k$ lengths.  For any $l < j < k$, set $x_j := l(U_j) - l(U_{j+1})$ and define $d_j$ to be the number of boxes in $U_{j-1}$ that are attached to boxes in $U_j$.  For such $j$, the process of successively shifting $U_1, ... , U_{j-1}$ down $d_j$ rows and then successively performing $x_j$ RJDT slides on the outside corner of $U_j \bigoplus U_{j+1}$ is called the \textit{jth length swap (on $U$)}.  Note that the lengths of the resulting skew tableau are $(l(U_1), ... , l(U_{j-1}), l(U_{j+1}), l(U_j), l(U_{j+2}), ... , l(U_k))$.

\begin{lem}
Let $U$ be a frank skew tableau of shape $\zeta \setminus \eta$ that rectifies to a tableau $T$.  For $l < j < k$, the $j$th length swap produces a frank skew tableau $U'$ that also rectifies to $T$.
\end{lem}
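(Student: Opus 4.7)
The conclusion packages three claims: that $U'$ is a valid skew tableau whose lengths are those of $U$ with positions $j$ and $j+1$ transposed; that $U'$ rectifies to $T$; and that $U'$ is frank. I plan to decompose the $j$th length swap into its two constituent operations — the $d_j$-row downward shift of $U_1, \ldots, U_{j-1}$ and the $x_j$ successive RJDT slides at the outside corner of $U_j \bigoplus U_{j+1}$ — and verify each piece separately.

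For the shift, invoke Lemma 3.1 directly with the parameter there taken to be $j-1$ and $d := d_j$: this realizes the shift as a sequence of RJDT slides that leaves $U_1, \ldots, U_{j-1}$ entrywise unchanged while, by the definition of $d_j$, geometrically isolates $U_j$ from any column on its left within the rows it occupies. Next, I would argue by induction on the number of slides that each RJDT slide at the outside corner of $U_j \bigoplus U_{j+1}$ transfers exactly one box from column $j$ to column $j+1$, so that per slide $l(U_j)$ decreases by one and $l(U_{j+1})$ increases by one; the empty square created by the slide travels strictly within columns $j$ and $j+1$, never straying left (by the isolation just established) nor right (since $U_{j+2}$ is not adjacent in the direction the hole moves). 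After $x_j = l(U_j) - l(U_{j+1})$ slides, the column lengths agree with those stated in the paragraph preceding the lemma, and $U'$ is a valid skew tableau of the advertised shape.

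The remaining two claims follow quickly. Since every operation in the length swap is an RJDT slide and rectification is invariant under such slides, $U'$ rectifies to $T$. And since the column lengths of $U'$ are a transposition of those of $U$, which were themselves a rearrangement of the column lengths of $T$ by frankness of $U$, the lengths of $U'$ remain a rearrangement of those of $T$, so $U'$ is frank. The principal obstacle is the inductive step for the slides: one must track the path of the hole introduced by each RJDT slide carefully enough to confirm that it stays within columns $j$ and $j+1$ and that $x_j$ slides suffice to exhaust the length gap. Once that is handled, both the preservation of rectification and the preservation of frankness are essentially automatic.
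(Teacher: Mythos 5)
Your proposal is correct and follows essentially the same route as the paper: both arguments reduce to the facts that every step of the length swap is an RJDT slide (the shift via Lemma 3.1, then the $x_j$ explicit slides), that rectification is invariant under RJDT slides, and that frankness is preserved because the lengths of $U'$ are a rearrangement of those of $U$. The extra bookkeeping you propose about the hole's path and the resulting column lengths is already asserted in the paragraph defining the length swap, so the paper simply cites it rather than re-deriving it.
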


\begin{proof}
Rectification is preserved by RJDT slides.  Since $U$ is frank and the lengths of $U'$ are a rearrangement of the lengths of $U$, we have that $U'$ is also frank.
\end{proof}

\begin{prop}
Let $T$ be a tableau of shape $\zeta$ with $k$ columns and let $1 \leq i < k$.  Successively performing the $i$th, $(i+1)$st, ... , $(k-1)$st length swaps produces a frank skew tableau with lengths $(\zeta_1, ... , \zeta_{i-1} , \zeta_{i+1} , ... , \zeta_k, \zeta_i)$  that rectifies to $T$.  Its rightmost column is $R_i(T)$.
\end{prop}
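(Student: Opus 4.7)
The plan is a straightforward induction on the number of length swaps performed, invoking Lemma 3.3 at each step. For the base case, view $T$ itself as a frank skew tableau of shape $\zeta \setminus \emptyset$ that rectifies to itself. For the inductive step, suppose that after performing the $i$th through $(j-1)$st length swaps we have a frank skew tableau $U^{(j-1)}$ rectifying to $T$; then applying Lemma 3.3 yields that the $j$th length swap produces a frank skew tableau $U^{(j)}$ that also rectifies to $T$.

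Next, I would track the column lengths through the sequence of swaps. Since each length swap at position $j$ exchanges the lengths of columns $j$ and $j+1$, the value $\zeta_i$ originally at position $i$ migrates one position rightward with each successive swap. Starting from $(\zeta_1, \ldots, \zeta_k)$ and swapping at positions $i, i+1, \ldots, k-1$ in turn then yields the claimed final sequence $(\zeta_1, \ldots, \zeta_{i-1}, \zeta_{i+1}, \ldots, \zeta_k, \zeta_i)$. For the final assertion, $U^{(k-1)}$ is a frank skew tableau rectifying to $T$ whose rightmost column has length $\zeta_i$, so Corollary 3.2 identifies this rightmost column as $R_i(T)$.

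The main subtlety I anticipate is verifying that the $j$th length swap is actually defined on $U^{(j-1)}$ for each $j$ in the range $i, i+1, \ldots, k-1$. The definition requires $l < j$, where $l$ is the number of lengths of the inner shape of $U^{(j-1)}$. One must therefore check that, as the inner shape grows under the downward shifts (which affect only columns $1, \ldots, j-1$) and the RJDT slides (which rearrange columns $j$ and $j+1$), it never extends far enough to block a subsequent swap at position $j+1$. Careful bookkeeping of the inner shape after each length swap should confirm that $l$ remains strictly below the index of the next swap, so that the induction proceeds without interruption.
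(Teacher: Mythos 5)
Your argument is essentially the paper's own proof, which simply applies the length-swap lemma (Lemma 3.4 in the paper's numbering, your ``Lemma 3.3'') as $j$ runs from $i$ to $k-1$ and then identifies the rightmost column via Corollary 3.3; you have just spelled out the induction, the length bookkeeping, and the well-definedness of each successive swap in more detail. The approach is correct and matches the paper.
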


\begin{proof}
Apply Lemma 3.4 as $j$ runs from $i$ to $k-1$.
\end{proof}

\section{Main Result}

The method of producing $R(T)$ in this section parallels the construction of $S(T)$ in Section 2.  Let $\zeta$ be a shape with $k$ lengths.  Fix a tableau $T$ of shape $\zeta$.

\begin{lem}
$R(T) = R_1(T) \bigoplus R(T \setminus T_1)$.
\end{lem}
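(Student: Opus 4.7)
The identity unpacks column by column: using $R(T) = \bigoplus_{i=1}^{k} R_i(T)$ and $R(T \setminus T_1) = \bigoplus_{j=1}^{k-1} R_j(T \setminus T_1)$, the lemma reduces to showing $R_i(T) = R_{i-1}(T \setminus T_1)$ for $2 \leq i \leq k$. Since $T \setminus T_1$ has shape $(\zeta_2, \ldots, \zeta_k)$, both $R_i(T)$ and $R_{i-1}(T \setminus T_1)$ correspond to columns of length $\zeta_i$, so the comparison makes sense.

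Fix such an $i$. The plan is to produce frank skew tableaux witnessing $R_i(T)$ and $R_{i-1}(T \setminus T_1)$ by running Proposition 3.5 in parallel on $T$ and $T \setminus T_1$. Applying Proposition 3.5 to $T$ at index $i$ successively performs the $i$th through $(k-1)$st length swaps on $T$, producing a frank skew tableau $U$ rectifying to $T$ whose rightmost column is $R_i(T)$. Because each swap is at a position $j \geq i \geq 2$, it never modifies the entries of column 1: the RJDT slides act on the outside corner of $U_j \bigoplus U_{j+1}$, and the preliminary shift merely translates earlier columns (including $T_1$) downward as a rigid block. Hence the first column of $U$ remains a copy of $T_1$ throughout.

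The key step is to identify $V := U \setminus T_1$ with the frank skew tableau produced by Proposition 3.5 applied to $T \setminus T_1$ at index $i-1$. I would prove this by induction on the number of length swaps performed, running the two constructions side by side: the $j$th length swap on the $T$-side and the $(j-1)$st length swap on the $(T \setminus T_1)$-side share the same attachment count $d_j$, the same swap count $x_j$, and the same RJDT slide locations, because all of these depend only on the columns at local positions $j-1, j, j+1$ on the $T$-side (equivalently $j-2, j-1, j$ on the $(T \setminus T_1)$-side). The only discrepancy is the additional shift applied to $T_1$ itself on the $T$-side, which is invisible to $V$.

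Once this identification is complete, $V$ is a frank skew tableau rectifying to $T \setminus T_1$ with rightmost column length $\zeta_i$, so by Corollary 3.3 applied to $T \setminus T_1$, its rightmost column is $R_{i-1}(T \setminus T_1)$. Since $V$ and $U$ share their rightmost column, we obtain $R_i(T) = R_{i-1}(T \setminus T_1)$. The main obstacle is the inductive parallel-tracking step; the bookkeeping is conceptually transparent but requires care with the one-step index offset and with verifying that the shift of $T_1$ truly has no influence on the columns of $V$.
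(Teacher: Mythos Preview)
Your approach is correct and is essentially the same as the paper's: both arguments rest on the observation (from Proposition~3.5) that computing $R_i(T)$ for $i \geq 2$ via successive length swaps never touches the first column except to shift it rigidly, so the computation is identical to that of $R_{i-1}(T \setminus T_1)$. The paper compresses this into two sentences, while you spell out the parallel-tracking inductive mechanism; your added care is not wrong, but the paper regards the independence from $T_1$ as immediate once Proposition~3.5 is in hand.
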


\begin{proof}
Proposition 3.5 implies that the calculation of $R_i(T)$ does not involve the first $i-1$ columns of $T$.  Therefore the calculations for $\bigoplus_{i=2}^k R_i(T)$ are independent of $T_1$.
\end{proof}

We now focus on producing $R_1(T)$.  Define $T^{(1)} := T$ and for $1 \leq j \leq k-1$ define $T^{(j+1)}$ to be the skew tableau obtained by performing the $j$th length swap on $T^{(j)}$.  The shape of $T^{(j)}$ has lengths $(\zeta_2, ... , \zeta_j, \zeta_1, \zeta_{j+1}, ... , \zeta_k)$. Therefore $T_k^{(k)} = R_1(T)$.  Let $R_{1B}(T)$ denote the bottom entry of $R_1(T)$.  Recall that $b_i$ is the bottom entry of $T_i$ for $1 \leq i \leq k$, and that $b = (b_1, ... , b_k)$.  For $1 \leq j, h \leq k$, let $b_h^{(j)}$ denote the bottom entry of $T_h^{(j)}$.  Since $R_1(T) = T_k^{(k)}$, we have that $R_{1B}(T) = b_k^{(k)}$.  Note that if $h > j$, then $T_h^{(j)} = T_h$ and hence $b_h^{(j)} = b_h$.  For $1 \leq j \leq k-1$, the entry $b_{j+1}^{(j+1)}$ produced by the $j$th length swap on $T^{(j)}$ is one of two possible entries:

\begin{lem}
Let $1 \leq j \leq k-1$. \\
(i)  If $b_{j+1}^{(j)} \geq b_j^{(j)}$, then $b_{j+1}^{(j+1)} = b_{j+1}^{(j)}$. \\
(ii)  Otherwise, $b_{j+1}^{(j+1)} = b_j^{(j)}$.
\end{lem}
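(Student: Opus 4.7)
The plan is to reduce the $j$th length swap to an analysis of two-column RJDT slides on the pair $U_j, U_{j+1}$, and then to track the bottom entry of column $j+1$ through the $x_j$ successive slides.

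First I would observe that after the preliminary shift of $U_1, \ldots, U_{j-1}$ down $d_j$ rows, those columns sit strictly below row $\zeta_1$, and are therefore disconnected from $U_j$ (which occupies rows $1, \ldots, \zeta_1$) and from $U_{j+1}$ (which occupies rows $1, \ldots, \zeta_{j+1}$). Consequently each of the $x_j = \zeta_1 - \zeta_{j+1}$ RJDT slides in the swap takes place entirely inside columns $j$ and $j+1$: whenever the empty box is in column $j$, its left neighbor lies outside the shape, so the box can only travel upward within column $j$.

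Write the entries of $U_j$ from top to bottom as $u_1 < u_2 < \cdots < u_{\zeta_1}$, so that $b_j^{(j)} = u_{\zeta_1}$. For $0 \leq m \leq x_j$ let $B_m$ denote the bottom entry of column $j+1$ after $m$ slides have been performed, so that $B_0 = b_{j+1}^{(j)}$ and $B_{x_j} = b_{j+1}^{(j+1)}$. The heart of the proof is the recurrence
\[ B_m = \max(B_{m-1},\,u_{\zeta_{j+1}+m}) \qquad (1 \leq m \leq x_j). \]
The $m$th slide begins with its empty box at position $(\zeta_{j+1}+m,\,j+1)$; its upstairs neighbor is $B_{m-1}$ by definition. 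I would verify by induction on $m$ that its left neighbor, at row $\zeta_{j+1}+m$ of column $j$, is still the original entry $u_{\zeta_{j+1}+m}$: each preceding slide modifies entries only at rows at most equal to its own starting row $\zeta_{j+1}+m'$ with $m' < m$, since its empty box starts at that row and only moves upward. The RJDT rule then pulls down the larger of the two candidates (with ties going to the entry from above, to preserve column strictness), giving $B_m = \max(B_{m-1},\,u_{\zeta_{j+1}+m})$.

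Iterating the recurrence,
\[ B_{x_j} = \max(B_0,\,u_{\zeta_{j+1}+1},\,\ldots,\,u_{\zeta_1}) = \max(b_{j+1}^{(j)},\,b_j^{(j)}), \]
using the strict increase of the $u_i$'s to collapse the maximum over $u_{\zeta_{j+1}+1},\ldots,u_{\zeta_1}$ into $u_{\zeta_1} = b_j^{(j)}$. Cases (i) and (ii) then follow at once by reading off which argument of the maximum dominates. The main obstacle is the invariance used in the induction: even though a slide whose empty box crosses from column $j+1$ into column $j$ can shuffle some entries of $U_j$, it does so only at rows strictly above its starting row, so row $\zeta_{j+1}+m$ of column $j$ remains untouched through slides $1,\ldots,m-1$ and the recurrence closes.
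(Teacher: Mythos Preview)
Your proof is correct and follows essentially the same approach as the paper: both analyze the $x_j$ RJDT slides as a two-column process on $T_j^{(j)} \bigoplus T_{j+1}^{(j)}$ and track the bottom entry of column $j+1$, relying on the fact that the lower rows of column $j$ are untouched by earlier slides. The paper argues directly by cases (identifying the globally largest entry and observing where it lands), whereas you package the same observation as the recurrence $B_m = \max(B_{m-1}, u_{\zeta_{j+1}+m})$ and read off the two cases at the end; this is a cosmetic difference rather than a substantive one.
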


\begin{proof}
Let $1 \leq j \leq k-1$.  The difference in lengths of the two rightmost columns involved in the $j$th length swap on $T^{(j)}$ is $x_j =  \zeta_1 - \zeta_{j+1}$.  First suppose $b_{j+1}^{(j)} \geq b_j^{(j)}$.  In this case, by the semistandard conditions $b_{j+1}^{(j)}$ is greater than every entry in $T_{j}^{(j)} \bigoplus T_{j+1}^{(j)}$ except perhaps $b_j^{(j)}$.  Thus the first $x_j-1$ RJDT slides of the $j$th length swap will each pull $b_{j+1}^{(j)}$ down one row.  Then the $x_j$th RJDT slide will compare $b_j^{(j)}$ to $b_{j+1}^{(j)}$ and also pull $b_{j+1}^{(j)}$ down. Hence $b_{j+1}^{(j+1)} = b_{j+1}^{(j)}$.  Otherwise, $b_{j+1}^{(j)} < b_j^{(j)}$.  In this case, $b_j^{(j)}$ is greater than every entry in $T_{j}^{(j)} \bigoplus T_{j+1}^{(j)}$.  As a result, the $x_j$th RJDT slide will move $b_j^{(j)}$ to the right regardless of what it is compared to.  Thus $b_{j+1}^{(j+1)} = b_j^{(j)}$.
\end{proof}

Note that {\sl (ii)} happens precisely when the final RJDT slide of the $j$th length swap moves the bottom entry of $T_j^{(j)}$ across to become the bottom entry of $T_{j+1}^{(j+1)}$.  In this case, the EWIS $e$ of $b$ ``skips'' $T_{j+1}$.  The complementary possibility {\sl (i)} occurs precisely when a new member is appended to $e$.  Hence:

\begin{cor}
$R_{1B}(T) = S_{1B}(T)$.
\end{cor}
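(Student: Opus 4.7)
The plan is to prove this by induction on $j$ that $b_j^{(j)}$ equals the last member of the EWIS of the truncated bottom-entry sequence $(b_1, b_2, \ldots, b_j)$; the desired equality $R_{1B}(T) = S_{1B}(T)$ is then the case $j=k$, since $R_{1B}(T) = b_k^{(k)}$ and $S_{1B}(T) = e_m$ is the last member of the EWIS of $b = (b_1, \ldots, b_k)$.

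First I would record a simple recursion for the EWIS. Let $c_j$ denote the last member of the EWIS of $(b_1, \ldots, b_j)$. Directly from the definition of EWIS (greedy left-to-right with condition $x_{i_p} \geq x_{i_{p-1}}$), one checks that $c_1 = b_1$, and for $j \geq 2$ we have $c_j = b_j$ if $b_j \geq c_{j-1}$, and $c_j = c_{j-1}$ otherwise. The point is that the EWIS of $(b_1, \ldots, b_{j-1})$ terminates at some index $i_p$ with the property that no index strictly between $i_p$ and $j$ satisfies the EWIS growth condition; therefore the only way passage to the longer sequence can alter the EWIS is by appending $b_j$, and this happens exactly when $b_j \geq c_{j-1}$.

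Next I would run the induction. The base case $j = 1$ is immediate: $T^{(1)} = T$, so $b_1^{(1)} = b_1 = c_1$. For the inductive step, assume $b_j^{(j)} = c_j$. Since $j+1 > j$, the remark in the paragraph before Lemma 4.2 gives $b_{j+1}^{(j)} = b_{j+1}$. Now apply Lemma 4.2: in case (i), $b_{j+1} \geq b_j^{(j)} = c_j$, so the swap produces $b_{j+1}^{(j+1)} = b_{j+1}$, which matches the EWIS recursion $c_{j+1} = b_{j+1}$; in case (ii), $b_{j+1} < c_j$, so the swap produces $b_{j+1}^{(j+1)} = b_j^{(j)} = c_j$, which matches $c_{j+1} = c_j$. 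Either way $b_{j+1}^{(j+1)} = c_{j+1}$, closing the induction.

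There is no real obstacle here beyond being careful that the EWIS recursion $c_j$ is in fact local, i.e., only depends on comparing $b_j$ to $c_{j-1}$ rather than to some earlier entry of $b$; this is the one spot that warrants a sentence of justification, and it follows directly from the greedy defining property of the EWIS. Once the recursions for $b_j^{(j)}$ (from Lemma 4.2) and for $c_j$ are lined up, the corollary is simply the statement that these two recursions compute the same value.
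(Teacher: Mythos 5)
Your proposal is correct and takes essentially the same route as the paper: the paper's proof also iterates Lemma 4.2 for $j$ from $1$ to $k-1$, matching case (i) with the EWIS appending a new member and case (ii) with the EWIS skipping $T_{j+1}$, to conclude that $b_k^{(k)}$ is the last member of $e$. Your explicit induction with the invariant $b_j^{(j)} = c_j$ (the last member of the EWIS of $(b_1,\ldots,b_j)$), together with the observation that the EWIS recursion is local, is just a more carefully written version of the same argument.
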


\begin{proof}
Apply these two observations as $j$ run from 1 to $k-1$ in Lemma 4.2 to see that $b_k^{(k)}$ is the last member of $e$.
\end{proof}

Recall that $d$ denotes the collection of boxes containing $e$:

\begin{lem}
$R_1(T) = R_{1B}(T) \bigodot R_1(T \setminus d)$.
\end{lem}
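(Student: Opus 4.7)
The plan is to apply Corollary 3.3 to $T \setminus d$: if I exhibit a frank skew tableau $V$ rectifying to $T \setminus d$ whose rightmost column has length $\zeta_1 - 1$ and agrees with $T_k^{(k)}$ stripped of its bottom entry, then that rightmost column equals $R_1(T \setminus d)$. Combined with Proposition 3.5 (which gives $R_1(T) = T_k^{(k)}$) and Corollary 4.4 (which identifies the bottom entry of $T_k^{(k)}$ as $e_m = R_{1B}(T)$), the identity $R_1(T) = R_{1B}(T) \bigodot R_1(T \setminus d)$ will follow.

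To construct $V$, I would track the $m$ entries of $d$ through the length-swap process producing $T^{(k)}$. Iterating Lemma 4.2: whenever the $j$th length swap falls into case (ii), the bottom entry of column $j$ migrates one column to the right; whenever it falls into case (i), the bottom entry of column $j+1$ stays fixed and $b_j^{(j)}$ remains at the bottom of column $j$. Starting from the fact that $e_r = b_{i_r}$ sits at the bottom of column $i_r$ of $T^{(i_r)}$, this bookkeeping shows that $e_r$ ends up at the bottom of column $i_{r+1} - 1$ of $T^{(k)}$ for $r < m$, and that $e_m$ ends up at the bottom of column $k$. Let $D$ denote the resulting set of $m$ cells and set $V := T^{(k)} \setminus D$. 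A direct comparison of multisets of column lengths shows that the lengths of $V$ are a rearrangement of those of $T \setminus d$, so $V$ is frank, and its rightmost column is precisely $T_k^{(k)}$ with the entry $e_m$ removed.

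The main obstacle is showing that $V$ rectifies to $T \setminus d$. I would prove this by induction on $j$, establishing that for each $j = 1, \ldots, k$, the skew tableau obtained from $T^{(j)}$ by deleting the current positions of the migrated $d$-entries rectifies to $T \setminus d$. The base case $j = 1$ is immediate since $T^{(1)} = T$ and the deleted set is exactly $d$. For the inductive step, one uses that RJDT slides preserve rectification together with the key structural observation from the proof of Lemma 4.2: in a case (ii) swap, the migrating entry $b_j^{(j)}$ is strictly larger than every other entry in the two columns being processed, so the RJDT slides acting on the remaining (smaller) entries proceed in a way that commutes with the removal of $b_j^{(j)}$; a parallel case-analysis handles case (i), where $b_j^{(j)}$ is only compared with the equal-or-larger $b_{j+1}^{(j)}$ before being left in place. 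Once $V$ is shown to rectify to $T \setminus d$, Corollary 3.3 identifies its rightmost column with $R_1(T \setminus d)$ and closes the argument.
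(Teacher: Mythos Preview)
Your approach is correct and reaches the same conclusion, but it is packaged differently from the paper's argument.

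The paper works column by column: it shows directly that the entries of $T_{j+1}^{(j+1)}$ lying \emph{above} $b_{j+1}^{(j+1)}$ are the same whether or not the EWIS entries are present, using the elementary observation $(*)$ that the bottom entry $b_j^{(j)}$ never influences which entries above it in column $j$ are pulled right. Iterating this from $j=1$ to $k-1$ immediately gives $R_1(T)\setminus R_{1B}(T)=R_1(T\setminus d)$.

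You instead follow the $m$ EWIS entries all the way to $T^{(k)}$, delete them there to form $V$, and then appeal to Corollary~3.3 once you know $V$ is frank and rectifies to $T\setminus d$. The bookkeeping you give for where each $e_r$ lands (bottom of column $i_{r+1}-1$, with $e_m$ at the bottom of column $k$) is correct, and the multiset-of-lengths check goes through because $i_1=1$. The substantive step---your induction that $T^{(j)}\setminus D^{(j)}$ and $T^{(j+1)}\setminus D^{(j+1)}$ have the same rectification---boils down to exactly the paper's observation $(*)$: in both case (i) and case (ii) the RJDT moves on the non-EWIS entries are the same with or without the EWIS entry present (up to a harmless column shift handled by Lemma~3.1). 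So the two proofs share the same engine; yours wraps it in the rectification/frankness formalism, while the paper compares the columns directly.

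Two small points. First, you cite ``Corollary~4.4'' where you mean Corollary~4.3. Second, you do not explicitly verify that $V=T^{(k)}\setminus D$ is a valid skew tableau; this is not automatic from the description of $D$, but it does fall out of your induction once it is carried through carefully (each $T^{(j)}\setminus D^{(j)}$ is obtained from the previous one by genuine RJDT slides plus shifts, hence is a skew tableau). The paper's more direct route avoids needing this check at all.
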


\begin{proof}
Let $1 \leq j \leq k-1$.  We need to show that the entries of $T_{j+1}^{(j+1)}$ above $b_{j+1}^{(j+1)}$ determined by the $j$th length swap on $T^{(j)}$ are unaffected by the presence of the entries of $e$ in $T_j^{(j)}$.  (These entries are initially located at $d$ in $T^{(1)} = T$.) It is clear that:  $(*)$ For $1 \leq j \leq k-1$, the presence of $b_j^{(j)}$ will not affect whether or not the entries in $T_j^{(j)}$ above it will be pulled to the right during the $j$th length swap.  Since $e_1 = b_1^{(1)}$, it can be safely removed from $T$ (and its successor skew tableaux).

From Lemma 4.2, all but the first member of $e$ arise as $b_{j+1}^{(j)}$ for some $1 \leq j \leq k-1$.  Suppose $1 \leq j \leq k - 1$ and $b_{j+1}^{(j)} = b_{j+1}^{(j+1)}$.  By the proof of Lemma 4.2 we have that $b_{j+1}^{(j)}$ was pulled straight down from the bottom of $T_{j+1}^{(j)}$ to the bottom of $T_{j+1}^{(j+1)}$.  So the first move in every RJDT slide executed in the $j$th length swap pulls $b_{j+1}^{(j)}$ down 1 row.  Then the remainder of the RJDT slide continues as it would have if $b_{j+1}^{(j)}$ was not originally part of $T^{(j)}$.  Now $b_{j+1}^{(j)} = b_{j+1}^{(j+1)}$, so by $(*)$ it can be safely removed from $T$ (and its successor skew tableaux).  Similar reasoning shows that $b_k^{(k)}$ can be safely removed when $b_k^{(k-1)} = b_k^{(k)}$.  So we see that the presence of the entries in $e$ does not affect the determination of $R_1(T) \setminus R_{1B}(T)$.  Thus once $R_{1B}(T)$ is found, we can remove all of $d$ (and $e$) from $T$ and then find the rest of $R_1(T)$ iteratively without the entries of $e$ being present.
\end{proof}

\begin{thm}
Let $T$ be a tableau.  The right key $R(T)$ is equal to the scanning tableau $S(T)$.
\end{thm}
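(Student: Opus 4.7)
The plan is to prove $R(T) = S(T)$ by a double induction whose structure is essentially dictated by the preceding results: Lemmas 4.1 and 4.4, together with Corollary 4.3, already provide recursive descriptions of $R$ and $R_1$ that precisely mirror the defining recursions of $S$ and $S_1$. So the argument would mostly amount to invoking these three results in the correct order.

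First I would set up the outer induction on $k$, the number of columns of $T$. The base case $k=0$ is trivial. For the inductive step, Lemma 4.1 gives $R(T) = R_1(T) \bigoplus R(T \setminus T_1)$, while the definition of $S$ gives $S(T) = S_1(T) \bigoplus S(T \setminus T_1)$. Since $T \setminus T_1$ has $k-1$ columns, the outer inductive hypothesis handles the right-hand summands. The remaining task would be to establish $R_1(T) = S_1(T)$ for arbitrary $T$, which I would prove by a separate inner induction on $\zeta_1$, the length of the first column.

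For the inner induction, the base case $\zeta_1 = 0$ is again trivial. For $\zeta_1 \geq 1$, Lemma 4.4 gives $R_1(T) = R_{1B}(T) \bigodot R_1(T \setminus d)$, while the definition of $S_1$ gives $S_1(T) = S_{1B}(T) \bigodot S_1(T \setminus d)$. Corollary 4.3 matches the bottom entries. Because $e_1 = b_1$, the first removed box $d_1$ is the bottom cell of $T_1$, so $T \setminus d$ has first column of length $\zeta_1 - 1$; the inner inductive hypothesis would then yield $R_1(T \setminus d) = S_1(T \setminus d)$, closing the inner induction.

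There is not really a hard step in Theorem 4.5 itself. The main obstacle has already been overcome in the preceding results: Corollary 4.3, which matches the bottoms via the EWIS property, and Lemma 4.4, which shows that erasing $d$ does not disturb the length-swap computation of the rest of $R_1(T)$. After those, the proof of Theorem 4.5 is the bookkeeping above, with the only routine checks being that both inductions genuinely descend, which they do since $T \setminus T_1$ has one fewer column and $T \setminus d$ has a strictly shorter first column.
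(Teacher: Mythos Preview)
Your proposal is correct and matches the paper's own proof, which is the single sentence ``The combination of Corollary 4.3, Lemma 4.4, and Lemma 4.1 agrees with the doubly recursive definition of $S(T)$.'' You have simply spelled out the double induction that this sentence is pointing to, including the routine check that both inductions descend.
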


\begin{proof}
The combination of Corollary 4.3, Lemma 4.4, and Lemma 4.1 agrees with the doubly recursive definition of $S(T)$.
\end{proof}

\section{The Left Key}

Here is a method to write down the left key of a given semistandard tableau $T$:  Draw an empty Young diagram that has the same shape as $T$.  Let $k$ be the number of columns of $T$.  Let $a = (a_1, ... , a_k)$ be the sequence defined as follows:  Let $a_1$ be the bottom entry of the $k$th column. For $1 < i \leq k$, let $a_i$ be the largest entry in the $(k+1-i)$th column that is less than or equal to $a_{i-1}$.  Note that by the semistandard row condition, at least one such entry is guaranteed to exist.  Put a dot above each entry of $a$.  Place $a_k$ in the lowest available box in the rightmost available column of the Young diagram.  Next, repeat the process as if the boxes with dots and all boxes below them are no longer part of $T$.  Once such a sequence has been found for each entry in the rightmost column of $T$, the rightmost column of the left key of $T$ has been determined.  To find the next column in the left key, ignore the rightmost column of $T$, erase all the dots, and repeat the above process.  Continue in this manner until the entire Young diagram has been filled.

The proof that the tableau produced by this method is the left key of $T$ is similar to the proof of the analogous statement concerning the right key given in this paper.  In fact the proof is simpler, since the sequence produced above necessarily contains exactly one entry from each column to the left of its starting point.

\section{Previous Methods}

Lascoux and Sch\"{u}tzenberger introduced [3] the notions of right and left key in Definition 2.9 and developed their relation to Demazure characters on pp. 132 - 138.  The paper [6] by Reiner and Shimozono explicitly stated the summation formula for the ``key polynomial'' in Theorem 1.  Fulton's book [2] appeared in 1997.  In [4] Lenart stated (on p. 280) that one can use ``reverse column insertion'' to produce the left key of a tableau, restates [6, Thm. 1] as Theorem 4.1, and discusses how this result may be proved using material from [3].  In [1], Aval showed how to compute the left key of a tableau using manipulations of ``sign matrices''.  Then he obtained the right key from the left key by using the notion of ``complement'' of a Young tableau.  Mason presented a method in [5] to produce the right key of a tableau using ``semiskyline augmented fillings''.  Both [1] and [5] had larger goals than key computations that led to these procedures.

The author would like to thank Sarah Mason for her remarks and encouragement.  Reading [5] led to the creation of the method presented in this paper, whose original proof referred to her Corollary 5.1.  The author would also like to thank Vic Reiner for suggesting the possibility of the more direct proof given in this paper.  In addition, both Mark Shimozono and Jim Haglund provided valuable perspective to the author's advisor through conversations.

Finally, the author would like to thank his advisor Bob Proctor for suggesting many of the aspects of this presentation and for his repeated encouragement.  Among others, these expositional aspects include the English description of the scanning method, notating shapes with column lengths, and structuring the proof of the main result to parallel the scanning method.

\end{document}